\newcommand\faint[1]{{\color{lightgray} #1}}
\newtheorem{lemma}{Lemma}
\newtheorem{theorem}[lemma]{Theorem}
\newtheorem{corollary}[lemma]{Corollary}
\title[Miniumum-weight perfect matchings on the line]%
{Minimum-weight perfect matching for~non-intrinsic distances on the
  line}
\author[J. Delon]{Julie Delon}
\address{LTCI CNRS\\
  TELECOM ParisTech\\
  46 rue Barrault\\
  75634 Paris\\
  France}
\author[J. Salomon]{Julien Salomon}
\address{CEREMADE\\
  Universit\'e Paris-Dauphine\\
  Place du Marechal Lattre de Tassigny\\
  75775 Paris\\
  France}
\author[A. Sobolevski]{Andrei Sobolevski}
\address{Institute for Information Transmission Problems (Kharkevich
  Institute)\\
  19 B. Karetny per.\\
  Moscow 127994\\
  Russia}
\address{Laboratoire J.-V.~Poncelet (UMI~2615 CNRS)\\
  11 B. Vlassievski per.\\
  Moscow 119002\\
  Russia
}
\email{sobolevski@iitp.ru}
\thanks{Supported by Agence Nationale de la Recherche under project
  ANR07--01--0235 OTARIE and the Russian Foundation for Basic Research
  under project 07--01--92217-CNRSL-a.}
\begin{document}

\begin{abstract}
  We consider a minimum-weight perfect matching problem on the line
  and establish a ``bottom-up'' recursion relation for weights of
  partial minimum-weight matchings. %
\end{abstract}

\maketitle

\section{Introduction}
\label{sec:introduction}

We start with recalling a few notions from combinatorial optimization
on graphs. %
A \emph{matching} in an undirected graph is any set of its mutually
disjoint edges: no two edges from such set can share a vertex. %
A matching is called \emph{perfect} if it involves all vertices of the
graph (the number of vertices is then necessarily even). %
  
Depending on the structure of the graph, perfect matchings may be
many. %
Suppose that edges of a graph are endowed with real \emph{weights};
then it makes sense to look for a perfect matching composed from a set
of edges with a minimum sum of weights. %
In this note we treat a particular case of this \emph{mini\-mum-weight
  perfect matching} problem where the graph is complete, all its
vertices are located on a line, and edge weights are related to
distances between the vertices along the line. %

A bipartite version of this problem, in which vertices are divided
into two equal classes and edges of the matching must connect vertices
of different class, reduces to transport optimization. %
For a particular class of cost functions (those of ``concave type'')
this problem has been thoroughly treated in the measure-theoretic
setting in~\cite{McCann.R:1999}. %
Similar problems have also long been considered in the algorithmics
literature for the specific case of the distance $|x - y|$, assuming
that the measures are discrete
\cites{Karp.R:1975,Werman.M:1986,Aggarwal.A:1992}. %
An algorithm for a general cost function of a concave type has been
proposed recently in~\cites{Delon.J:2010,Delon.J:2011}. %

In the present note we consider the minimum-weight perfect matching
problem on a complete graph without assuming it to be bipartite. %
However, for the class of weight functions generated by distances, the
non-bipartite problem turns out to be essentially equivalent to a
bipartite problem with alternating points due to a ``no-crossing''
property of the optimal matching. %
This observation, which we owe to S.~Nechaev, allows to employ the
theory developed for the bipartite case
in~\cites{Delon.J:2010,Delon.J:2011} to the non-bipartite matching. %

The main contribution of the present note is a specific ``bottom-up''
recursion relation~\eqref{eq:7} for partial minimum-weight
matchings. %
This recursion follows from a ``localization'' property of
minimum-cost perfect matchings for concave cost functions
(Theorem~\ref{stabilization}) and provides a new perspective on the
construction of~\cites{Delon.J:2010,Delon.J:2011}. %

The note is organized as follows. %
Section~\ref{sec:metric} is a review of the basic construction of
metrics on the real line. %
In Section~\ref{sec:matching-problem} we recall the problem of
minimum-weight perfect matching and cite a few useful results about
the structure of its solution. %
Section~\ref{sec:stitching-lemma} contains the key technical result, a
kind of localization principle for minimum-weight matching. %
Finally in Section~\ref{sec:bellman-equation} we derive the recursive
relation for partial minimum-weight matchings. %
We also discuss the ensuing algorithm and compare it to modern
variants of the Edmonds blossom algorithm for the minimum-weight
perfect matching problem. %

It is our pleasure to thank the organizers of the conference
\textit{Optimization and Stochastic Methods for Spatially Distributed
  Information} (St~Petersburg, EIMI, May 2010), where an earlier
version of this work was presented. %

\section{Intrinsic and non-intrinsic distances on the line}
\label{sec:metric}

Recall the usual axioms for a distance $d(\cdot, \cdot)$ on the real
line~$\mathbf{R}$: for all $x, y, z \in \mathbf{R}$,
\begin{enumerate}[({D}1)]
\item\label{distnonneg} $d(x, y) \ge 0$ with $d(x, y) = 0$ iff $x
  = y$;
\item\label{distsymm} $d(x, y) = d(y, x)$;
\item\label{disttriang} $d(x, y) + d(y, z) \ge d(x, z)$. %
\suspend{enumerate} 
These axioms are satisfied by the distance $d(x, y) = |x - y|$ as well
as by any distance of the form
\begin{equation}
  \label{eq:1}
  d_g(x, y) = g(|x - y|),
\end{equation}
where $g(\cdot)$ is a nonnegative concave function defined for all $x
\ge 0$ such that $g(x) = 0$ iff $x = 0$. %
Here concavity means that $g(\lambda x + (1 - \lambda) y) \ge \lambda
g(x) + (1 - \lambda) g(y)$ for all $x, y\ge 0$ and all $\lambda \in
[0, 1]$. %
Note that the distance $d_g$ is \emph{homogeneous} with respect to
shifts:
\resume{enumerate}[{[({D}1)]}]
\item\label{disthomo} $d_g(x + t, y + t) = d_g(x, y)$ for all~$x, y, t
  \in \mathbf{R}$. %
\end{enumerate}

Conversely, any distance satisfying the axioms
(D\ref{distnonneg})--(D\ref{disthomo}) has the form \eqref{eq:1} for a
suitable nonnegative concave function~$g$: indeed, take $g(x) = d(0,
x)$ for $x \ge 0$ and check that concavity follows
from~(D\ref{disttriang}). %

An example of this construction is the distance $|x - y|^\alpha$ with
$0 \le \alpha \le 1$, where
\begin{displaymath}
   |x - y|^0 = \begin{cases} 0, & x = y, \\ 1, & x\neq y\end{cases}\qquad
\end{displaymath}
is the ``discrete distance''; here we are mostly interested in the
case $0 < \alpha < 1$. %

An important property of a distance~$d(\cdot, \cdot)$ is whether it is
\emph{intrinsic}. %
To recall the corresponding definition, take two distinct points $x, y
\in \mathbf{R}$ and connect them with parameterized curves taking
values $x(t)$ in~$\mathbf{R}$, i.e., suppose that $0\le t\le 1$, $x(0)
= x$, and $x(1) = y$. %
By the triangle inequality,
\begin{equation}
  \label{eq:2}
  d(x, y) \le \inf \sum_{0\le i<N} d(x(t_i), d(x(t_{i + 1})),
\end{equation}
where the infimum is taken over all curves connecting $x$ to~$y$ and
all meshes $0 = t_0 < t_1 < t_2 < \dots < t_N = 1$ with $N \ge 1$. %
The distance~$d$ is called intrinsic if~\eqref{eq:2} is an equality
for all $x, y$ (see, e.g., \cite{Burago.D:2001}). %

The distance $|x - y|$ and its scalar multiples are the only
homogeneous intrinsic distances in~$\mathbf{R}$. %
For distances $|x - y|^\alpha$ with $0 < \alpha < 1$, connecting
curves have infinite length unless $x = y$, and therefore these
distances are not intrinsic. %

The intuition behind this notion is that whereas the geometry
of~$\mathbf{R}$ equipped with an intrinsic distance is fully
determined by the distance, the same line~$\mathbf{R}$ with a
non-intrinsic distance should be viewed as embedded in an auxiliary
space of a larger dimension, and geometry on~$\mathbf{R}$ is induced
by that in the embedding space. %
According to the Assouad embedding theorem (see, e.g.,
\cite{Heinonen.J:2001}), for the distance $|x - y|^\alpha$
on~$\mathbf{R}$ the embedding space has dimension of the
order~$1/\alpha$ for small $\alpha$. %

\section{Minimum-weight perfect matchings}
\label{sec:matching-problem}

Consider an even number of points $x_1 < x_2 < \dots < x_{2n}$ on the
real line~$\mathbf{R}$ equipped with a distance~$d$ and look for a
\emph{minimum-weight perfect matching} in a complete graph~$K_{2n}$ on
these points, where the weight of an edge connecting $x_i$ to~$x_j$
is~$d(x_i, x_j)$. %
It is convenient to represent $\mathbf{R}$ with a horizontal interval
and use arcs in the upper halfplane to show the edges of the
graph~$K_{2n}$. %

The following lemma, adapted from McCann
\cite{McCann.R:1999}*{Lemma~2.1}, allows to describe the structure of
minimum-weight matchings. %

\begin{lemma}
  \label{no-crossing}
  Suppose the distance has the form~$d_g$ \eqref{eq:1} with a
  \emph{strictly concave} function~$g$ (i.e., $g(\lambda x + (1 -
  \lambda) y) > \lambda g(x) + (1 - \lambda) g(y)$ for all $x, y\ge
  0$, $x \neq y$, and $0 < \lambda < 1$). %
  Then the inequality
  \begin{equation}
    \label{eq:3}
    d_g(x_1, y_1) + d_g(x_2, y_2) \le d_g(x_1, y_2) + d_g(x_2, y_1)
  \end{equation}
  implies that the intervals connecting $x_1$ to~$y_1$ and $x_2$
  to~$y_2$ are either disjoint or one of them is contained in the
  other. %
\end{lemma}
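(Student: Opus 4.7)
The plan is to argue by contrapositive: assume the intervals with endpoints $\{x_1,y_1\}$ and $\{x_2,y_2\}$ are neither disjoint nor one contained in the other, and derive the strict reverse inequality
\[
  d_g(x_1,y_1)+d_g(x_2,y_2) > d_g(x_1,y_2)+d_g(x_2,y_1),
\]
in contradiction with \eqref{eq:3}. After relabeling, the four endpoints are in strict increasing order $a<c<b<d$ with $\{x_1,y_1\}=\{a,b\}$ and $\{x_2,y_2\}=\{c,d\}$. The left-hand side of \eqref{eq:3} is then $g(b-a)+g(d-c)$, while the right-hand side, depending on the labeling of endpoints within each pair, equals either $g(d-a)+g(b-c)$ (the ``nested'' swap) or $g(c-a)+g(d-b)$ (the ``disjoint'' swap).

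Before splitting cases I would record that a strictly concave $g$ on $[0,\infty)$ with $g(0)=0$ and $g\ge 0$ is automatically strictly increasing: any nonincreasing portion would, via the nonincreasing-secant-slopes characterization of concavity, force $g$ eventually below zero.

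For the nested swap, introduce the auxiliary function $f(x):=g(x+(c-a))-g(x)$. Strict concavity of $g$ is equivalent to $f$ being strictly decreasing on $[0,\infty)$. Applied to $b-c<d-c$, this yields $f(b-c)>f(d-c)$, which on unfolding is precisely
\[
  g(b-a)+g(d-c) > g(d-a)+g(b-c),
\]
the desired strict inequality. For the disjoint swap, strict monotonicity alone does the job: $b-a>c-a$ and $d-c>d-b$ give $g(b-a)>g(c-a)$ and $g(d-c)>g(d-b)$, and addition closes the case.

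The main difficulty I anticipate is not analytic but organizational: one must cleanly enumerate the two internal labelings of the four endpoints so that neither swap escapes the net. The analytic content reduces in each case to the nonincreasing-differences formulation of strict concavity, which is one step from the definition; hence no serious technical obstacle beyond the bookkeeping.
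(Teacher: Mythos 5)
Your proof is correct and follows essentially the same route as the paper's: both arguments reduce the crossing configuration to two sub-cases, dispose of one (your ``disjoint swap'') by the strict monotonicity of $g$, and of the other (your ``nested swap'') by strict concavity applied to two pairs of arguments with equal sums. The only cosmetic difference is that you invoke concavity via the strictly decreasing increments $x \mapsto g(x+h)-g(x)$, whereas the paper writes out the equivalent explicit convex combination with a chosen $\lambda$.
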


\begin{proof}
  Let arcs representing the matching be directed from $x$'s to~$y$'s
  and assume, without loss of generality, that $x_1$ is leftmost. %
  Then the configuration
  \begin{tikzpicture}
    \draw[thick] (0, 0) -- (1.75, 0);
    \draw[<-] (1, 0) arc (0:180:.375 and .25);
    \draw[->] (1.5, 0) arc (0:180:.375 and .25);
  \end{tikzpicture},
  where $x_1 < y_2 < y_1 < x_2$, cannot satisfy~\eqref{eq:3} because a
  strictly concave function~$g$ must be strictly growing and therefore
  $d_g(x_1, y_2) < d_g(x_1, y_1)$ and $d_g(x_2, y_1) < d_g(x_2,
  y_2)$. %
  To rule out configuration
  \begin{tikzpicture}
    \draw[thick] (0, 0) -- (1.75, 0);
    \draw[<-] (1, 0) arc (0:180:.375 and .25);
    \draw[<-] (1.5, 0) arc (0:180:.375 and .25);
  \end{tikzpicture},
  where $x_1 < x_2 < y_1 < y_2$, choose $0 < \lambda < 1$ such that
  \begin{displaymath}
    y_1 - x_1 = (1 - \lambda)(y_2 - x_1) + \lambda(y_1 - x_2).
  \end{displaymath}
  Then, since $(y_1 - x_1) + (y_2 - x_2) = (y_2 - x_1) + (y_1 - x_2)$,
  we obtain
  \begin{displaymath}
    y_2 - x_2 = \lambda(y_2 - x_1) + (1 - \lambda)(y_1 - x_2)
  \end{displaymath}
  and can further use the strict concavity of~$g$ to get
  \begin{displaymath}
    \begin{gathered}
      d_g(x_1, y_1) > (1 - \lambda)d_g(x_1, y_2) + \lambda d_g(x_2, y_1),\\
      d_g(x_2, y_2) > \lambda d_g(x_1, y_2) + (1 - \lambda)d_g(x_2, y_1).
    \end{gathered}
  \end{displaymath}
  The sum of these inequalities contradicts~\eqref{eq:3}. %
  All the other configurations where the points $x_2$, $y_1$, $y_2$
  are located to the right of~$x_1$, namely \begin{tikzpicture}
    \draw[thick] (0, 0) -- (1.75, 0); \draw[<-] (.75, 0) arc
    (0:180:.25); \draw[<-] (1.5, 0) arc (0:180:.25);
  \end{tikzpicture},
  \begin{tikzpicture}
    \draw[thick] (0, 0) -- (1.75, 0);
    \draw[<-] (.75, 0) arc (0:180:.25);
    \draw[->] (1.5, 0) arc (0:180:.25);
  \end{tikzpicture},
  \begin{tikzpicture}
    \draw[thick] (0, 0) -- (1.75, 0);
    \draw[<-] (1, 0) arc (0:180:.125);
    \draw[<-] (1.5, 0) arc (0:180:.625 and .25);
  \end{tikzpicture},
  \begin{tikzpicture}
    \draw[thick] (0, 0) -- (1.75, 0);
    \draw[->] (1, 0) arc (0:180:.125);
    \draw[<-] (1.5, 0) arc (0:180:.625 and .25);
  \end{tikzpicture}, are consistent with~\eqref{eq:3}. %
\end{proof}

It is possible to show that conversely, for an arbitrary bivariate
function $d(x, y)$ the property of Lemma~\ref{no-crossing} together
with homogeneity~(D\ref{disthomo}) imply that $d$ has the
form~\eqref{eq:1} for a suitable concave function~$g$
\cite{McCann.R:1999}*{Lemma~B4}. %

We call a matching \emph{nested} if, for any two arcs $(x_i, x_j)$ and
$(x_{i'}, x_{j'})$ that are present in the matching, the corresponding
intervals in~$\mathbf{R}$ are either disjoint or one of them is
contained in the other. %

\begin{theorem}[\cites{Aggarwal.A:1992,McCann.R:1999}]
  \label{nested}
  A minimum-weight matching is nested. %
\end{theorem}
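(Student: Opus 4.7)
The plan is to derive the theorem as a one-line corollary of Lemma~\ref{no-crossing} via a swap argument. Let $M$ be a minimum-weight perfect matching and suppose, for contradiction, that $M$ is not nested. Then it contains two arcs $(x_i, x_j)$ and $(x_{i'}, x_{j'})$ (with $i<j$, $i'<j'$) whose corresponding intervals cross, i.e., exactly one endpoint of one lies strictly in the interior of the other.

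Form a new matching $M'$ by deleting these two arcs and inserting in their place the two arcs $(x_i, x_{j'})$ and $(x_{i'}, x_j)$, leaving all other arcs of $M$ untouched. Since the four points $x_i, x_{i'}, x_j, x_{j'}$ are distinct and each is covered exactly once, $M'$ is again a perfect matching on the same vertex set. By minimality of $M$ we have $w(M)\le w(M')$, and after the common arcs cancel this reduces to
\begin{equation*}
  d_g(x_i, x_j) + d_g(x_{i'}, x_{j'}) \le d_g(x_i, x_{j'}) + d_g(x_{i'}, x_j),
\end{equation*}
which is exactly the hypothesis \eqref{eq:3} of Lemma~\ref{no-crossing} (with $x_1=x_i$, $y_1=x_j$, $x_2=x_{i'}$, $y_2=x_{j'}$). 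The lemma then forces the two intervals to be either disjoint or nested, contradicting the assumption that they cross.

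The argument is conceptually straightforward; the only points that require a little care are verifying that $M'$ is indeed a valid perfect matching (automatic, since we rewire on four distinct vertices) and that the particular pairing chosen for the swap matches the labeling in Lemma~\ref{no-crossing}. The main honest obstacle is the strict-concavity assumption in Lemma~\ref{no-crossing}: if $g$ is only concave, the inequalities in the lemma's proof become non-strict and the swap can yield an equal-weight configuration rather than a contradiction. In that case one either appeals to the cited references \cite{Aggarwal.A:1992} and \cite{McCann.R:1999} directly, or perturbs $g$ by a strictly concave term of arbitrarily small amplitude and passes to the limit, in which any accumulation point of the (finitely many) optimal matchings remains nested.
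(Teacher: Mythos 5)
Your proof is correct, and it takes a genuinely shorter route than the paper's. You argue by direct contradiction: if a minimum-weight matching contained a crossed pair of arcs, then minimality under the single swap $(x_i,x_j),(x_{i'},x_{j'})\mapsto(x_i,x_{j'}),(x_{i'},x_j)$ yields exactly hypothesis~\eqref{eq:3} of Lemma~\ref{no-crossing} (with the labeling you specify), and the lemma's conclusion forbids the crossing. Because this is a one-shot contradiction, you never need to iterate the swap, and so you never need to worry about whether repeated uncrossing terminates. The paper instead treats uncrossing as a process --- replace a crossed pair by the strictly cheaper uncrossed pair --- and devotes most of its proof to a case analysis showing that each uncrossing reduces the total number of crossings by exactly one, so that the process reaches a nested matching in finitely many steps. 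What that longer argument buys is a constructive and slightly stronger fact that the paper implicitly reuses later (in the proofs of Lemma~\ref{bellman} and Theorem~\ref{stabilization}): \emph{any} matching containing a crossing can be converted by finitely many uncrossings into a \emph{nested} matching of strictly smaller weight. Your argument proves the stated theorem but not that auxiliary fact. Your closing caveat about strict concavity is reasonable but tangential: the theorem inherits the strict-concavity hypothesis from Lemma~\ref{no-crossing}, and for merely concave $g$ the statement that \emph{every} minimizer is nested genuinely fails (e.g.\ $g(x)=\min(x,1)$ on the points $0,1,2,3$ makes all three perfect matchings, including the crossed one, have weight $2$); a perturbation argument as you sketch would only show that \emph{some} minimizer is nested.
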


\begin{proof}
  This statement essentially follows from Lemma~\ref{no-crossing},
  because whenever two arcs $(x_i, x_j)$ and~$(x_{i'}, x_{j'})$ are
  crossed, the sum of distances corresponding to the ``uncrossed''
  arcs $(x_i, x_{j'})$, $(x_{i'}, x_j)$ must be smaller according
  to~\eqref{eq:3}. %

  However uncrossing may introduce new crossings with other arcs, and
  it remains to be checked that it does decrease the total number of
  crossings. %
  This is done using an argument adapted from
  \cite{Aggarwal.A:1992}*{Lemma~1}. %

  Let an arc $(x_k, x_\ell)$ cross either of the arcs $(x_i, x_{j'})$
  or $(x_{i'}, x_j)$ obtained after uncrossing the arcs $(x_i, x_j)$
  and $(x_{i'}, x_{j'})$; we would like to prove that before
  uncrossing the arc~$(x_k, x_\ell)$ had the same number of crossings
  with $(x_i, x_j)$ and $(x_{i'}, x_{j'})$. %
  It suffices to consider the following two cases. %

  \textsc{Case $x_i < x_{j'} < x_j < x_{i'}$}. %
  Assume that $(x_k, x_\ell)$ crosses $(x_i, x_{j'})$. %
  Then $(x_k, x_\ell)$, shown as a dotted arc, may be situated in one of
  the following ways: %
  \begin{center}
    \begin{tabular}{cc}
    after uncrossing & before uncrossing \\
    \begin{tikzpicture}
      \draw[thick] (0, 0) -- (4, 0);
      \draw (.5, 0) node [below] {$x_i$} (1.5, 0) node [below] {$x_{j'}$}
      (2.5, 0) node [below] {$x_j$} (3.5, 0) node [below] {$x_{i'}$};
      \draw (1.5, 0) arc (0:180:.5) (3.5, 0) arc (0:180:.5);
      \draw[thick, dotted] (.7, 0) arc (0:180:.2)
      (1.7, 0) arc (0:180:.2)
      (2.7, 0) arc (0:180:.8 and .6)
      (3.7, 0) arc (0:180:1.4 and 1.3);
    \end{tikzpicture} &
    \begin{tikzpicture}
      \draw[thick] (0, 0) -- (4, 0);
      \draw (.5, 0) node [below] {$x_i$} (1.5, 0) node [below] {$x_{j'}$}
      (2.5, 0) node [below] {$x_j$} (3.5, 0) node [below] {$x_{i'}$};
      \draw (2.5, 0) arc (0:180:1) (3.5, 0) arc (0:180:1);
      \draw[thick, dotted] (.7, 0) arc (0:180:.2)
      (1.7, 0) arc (0:180:.2)
      (2.7, 0) arc (0:180:.8 and .6)
      (3.7, 0) arc (0:180:1.4 and 1.3);
    \end{tikzpicture}
    \end{tabular}
  \end{center}
  We see that the number of crossings between $(x_k, x_\ell)$ and the
  other two arcs is the same before and after uncrossing. %
  The case where $(x_k, x_\ell)$ crosses $(x_{i'}, x_j)$ is
  symmetrical. %

  \textsc{Case $x_i < x_{i'} < x_j < x_{j'}$}. %
  The following situations of the dotted arc $(x_k, x_\ell)$ are
  possible:
  \begin{center}
    \begin{tabular}{cc}
      after uncrossing & before uncrossing \\
      \begin{tikzpicture}
      \draw[thick] (0, 0) -- (4.25, 0);
      \draw (.5, 0) node [below] {$x_i$} (1.5, 0) node [below] {$x_{i'}$}
      (2.5, 0) node [below] {$x_j$} (3.5, 0) node [below] {$x_{j'}$};
      \draw (2.5, 0) arc (0:180:.5) (3.5, 0) arc (0:180:1.5 and 1.3);
      \draw[thick, dotted] (.75, 0) arc (0:180:.25)
      (1.75, 0) arc (0:180:.25)
      (3.75, 0) arc (0:180:.75)
      (4, 0) arc (0:180:1.5 and 1.3);
    \end{tikzpicture} &
    \begin{tikzpicture}
      \draw[thick] (0, 0) -- (4.25, 0);
      \draw (.5, 0) node [below] {$x_i$} (1.5, 0) node [below] {$x_{i'}$}
      (2.5, 0) node [below] {$x_j$} (3.5, 0) node [below] {$x_{j'}$};
      \draw (2.5, 0) arc (0:180:1) (3.5, 0) arc (0:180:1);
      \draw[thick, dotted] (.75, 0) arc (0:180:.25)
      (1.75, 0) arc (0:180:.25)
      (3.75, 0) arc (0:180:.75)
      (4, 0) arc (0:180:1.5 and 1.3);
    \end{tikzpicture} 
    \end{tabular}
  \end{center}
  Again the number of crossings between $(x_k, x_\ell)$ and the other two
  arcs is the same before and after uncrossing. %

  It follows that each uncrossing removes exactly one crossing from
  the matching, and therefore any possible sequence of uncrossings
  leads in a finite number of steps to a nested matching with a
  strictly smaller weight. %
  In other words, it suffices to look for the minimum-weight matching
  only among the nested ones. %
\end{proof}

This result implies that the minimum-weight perfect matching problem
is essentially bipartite, and it is indeed the bipartite setting that
is considered in \citelist{\cite{Aggarwal.A:1992} \cite{Delon.J:2010}
  \cite{Delon.J:2011} \cite{Karp.R:1975} \cite{McCann.R:1999}
  \cite{Werman.M:1986}} %

\begin{corollary}
  \label{bipartite}
  In a minimum-weight perfect matching, points with even numbers are
  matched to points with odd numbers. %
\end{corollary}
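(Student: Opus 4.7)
The plan is to reduce the corollary to a parity count on the nested structure guaranteed by Theorem~\ref{nested}. Since minimum-weight matchings are nested, it suffices to show that in any nested perfect matching of $x_1 < x_2 < \dots < x_{2n}$, every arc connects indices of opposite parity.

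The main step is the following observation. Fix an arc $(x_i, x_j)$ of the matching with $i < j$, and consider any index $k$ with $i < k < j$. The arc through $x_k$ shares no endpoint with $(x_i, x_j)$ (since the matching is, well, a matching), so by the nested property it must be either disjoint from the interval $[x_i, x_j]$ or strictly contained in it. Disjointness is impossible because $x_k$ itself lies in $(x_i, x_j)$. Hence the partner of $x_k$ also lies in the open interval $(x_i, x_j)$. Consequently, the $j - i - 1$ points $x_{i+1}, \dots, x_{j-1}$ are matched among themselves; this forces $j - i - 1$ to be even, so $j - i$ is odd, and $i$ and $j$ have different parities.

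I would carry this out in order as: (i) invoke Theorem~\ref{nested} to restrict attention to nested matchings; (ii) prove the ``interior points are matched internally'' lemma from the nested property by ruling out the disjoint and containment alternatives as above; (iii) conclude the parity statement from the resulting count $j - i \equiv 1 \pmod 2$, which is exactly the corollary.

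There is no real obstacle here: the argument is entirely combinatorial once nestedness is in hand, and the only subtle point is making sure one has excluded the case in which an interior arc would extend past $x_i$ or $x_j$ — but that would produce a crossing, contradicting nestedness. One could alternatively phrase the proof inductively, peeling off the innermost arc (which must connect two consecutive points $x_k, x_{k+1}$ of opposite parity) and applying the induction hypothesis to the remaining $2n - 2$ points, but the direct parity argument above seems cleanest.
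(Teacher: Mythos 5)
Your proof is correct and is essentially the paper's own argument: both rest on the observation that nestedness forces the points strictly between $x_i$ and $x_j$ to be matched among themselves, and then conclude by counting the parity of $j - i - 1$. The paper merely phrases it contrapositively (same parity would leave an odd number of interior points, forcing a crossing), while you argue directly; the content is identical.
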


\begin{proof}
  Suppose on the contrary that $x_i$ is matched to~$x_j$ with $i$
  and~$j$ both even or both odd. %
  The interval connecting $x_i$ to~$x_j$ contains an odd number $|i -
  j| - 1$ of points, one of which has to be matched to a point outside
  and thus cause a crossing with $(x_i, x_j)$. %
  Since a minimum-weight matching is nested, the result follows. %
\end{proof}

Observe finally the following simple form of the Bellman optimality
principle. %

\begin{lemma}
  \label{bellman}
  Any subset of arcs in a minimum-weight matching is itself a
  minimum-weight perfect matching on the set of endpoints of the arcs
  that belong to this subset.
\end{lemma}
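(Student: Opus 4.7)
The plan is to argue by a standard exchange principle: if a strictly better matching existed on the subset's endpoint set, we could splice it back into the global matching to improve it, contradicting optimality.

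In detail, I would begin by fixing a minimum-weight perfect matching~$M$ on $\{x_1, \ldots, x_{2n}\}$ and a subset $S \subseteq M$ of arcs. Let $V(S)$ denote the set of endpoints of arcs in $S$; by construction $|V(S)|$ is even and $S$ is itself a perfect matching on $V(S)$, with weight $w(S) = \sum_{(x_i, x_j) \in S} d(x_i, x_j)$.

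Next I would suppose, toward a contradiction, that $S$ is not minimum-weight on $V(S)$: there exists a perfect matching $S'$ on the same vertex set $V(S)$ with $w(S') < w(S)$. The key observation is that the arcs in $M \setminus S$ already form a perfect matching on the complementary vertex set $\{x_1, \ldots, x_{2n}\} \setminus V(S)$ (since $M$ covers all vertices exactly once and $S$ covers exactly $V(S)$). Therefore the set $M' := (M \setminus S) \cup S'$ is a perfect matching on the whole of $\{x_1, \ldots, x_{2n}\}$, and its weight satisfies
\begin{displaymath}
  w(M') = w(M) - w(S) + w(S') < w(M),
\end{displaymath}
contradicting the minimality of~$M$.

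There is no real obstacle here: the argument is entirely combinatorial and does not invoke the specific structure of the distance $d_g$ or the nestedness of Theorem~\ref{nested}. The only point to be careful about is verifying that $M \setminus S$ indeed matches all vertices outside $V(S)$, which is immediate from the definition of $V(S)$ and the fact that $M$ is a perfect matching. The conclusion then follows by contraposition.
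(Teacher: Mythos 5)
Your proof is correct and follows essentially the same exchange argument as the paper: replace the sub-matching by a hypothetically better one and observe that the resulting perfect matching would beat the original, contradicting its minimality. The paper's version only adds a parenthetical remark about uncrossing any crossings introduced by the rematch, which is not needed for the weight contradiction itself.
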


\begin{proof}
  Indeed, if one could rematch these points achieving a smaller total
  weight, then the full original matching itself would not be
  minimum-weight: rematching the corresponding subset of arcs (and
  possibly uncrossing any crossed arcs that might result from the
  rematch) would give a matching with a strictly smaller weight. %
\end{proof}

\section{Preservation of hidden arcs}
\label{sec:stitching-lemma}

Call an arc $(x_i, x_j)$ in a nested matching \emph{exposed} if it is
not contained in any other arc, i.e., if there is no arc $(x_{i'},
x_{j'})$ with $x_i, x_j$ contained between $x_{i'}$ and~$x_{j'}$. %
We call all other arcs in a nested matching non-exposed or
\emph{hidden}. %
Intuitively, exposed arcs are those visible ``from above'' and hidden
arcs are those covered with exposed ones. %

Suppose $X = \{x_i\}_{1\le i\le 2n}$ with $x_1 < x_2 < \dots < x_{2n}$
and $X' = \{x'_{i'}\}_{1\le i'\le 2n'}$ with $x'_1 < x'_2 < \dots <
x'_{2n'}$ be two sets such that $x_{2n} < x'_1$, i.e., $X'$ lies to
the right of~$X$. %
We will refer to minimum-weight perfect matchings on $X$ and~$X'$ as
\emph{partial matchings} and to the minimum-weight perfect matching
on~$X \cup X'$ as \emph{joint matching}. %
The following result is closely related to properties of ``local
matching indicators'' introduced and studied
in~\cites{Delon.J:2010,Delon.J:2011}. %

\begin{theorem}
  \label{stabilization}
  Whenever an arc $(x_i, x_j)$ [respectively $(x'_{i'}, x'_{j'})$] is
  hidden in the partial matching on~$X$ [respectively on~$X'$], it
  belongs to the joint optimal matching and is hidden there too. %
\end{theorem}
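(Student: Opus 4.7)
I plan to proceed by combining the Bellman principle (Lemma~\ref{bellman}) with exchange arguments based on the strict inequality that underlies Lemma~\ref{no-crossing}. By symmetry, consider a hidden arc $(x_i, x_j)$ of~$M$; let $(x_a, x_b)$ be its innermost cover in~$M$ and set $I := \{x_{a+1}, \ldots, x_{b-1}\}$. By Theorem~\ref{nested} the points of $I$ are matched inside $I$ by~$M$, and by Lemma~\ref{bellman} $M|_I$ is then a minimum-weight perfect matching on~$I$ in which $(x_i, x_j)$ is an exposed arc.

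The central claim I would aim for is that $I$ is matched within itself by the joint matching~$M^*$ as well. I argue by contradiction: if some arc of~$M^*$ has exactly one endpoint in~$I$ (a ``bridging'' arc), then parity of~$|I|$ together with nestedness of~$M^*$ (Theorem~\ref{nested}) force at least two such bridging arcs, organized into a nested hierarchy together with the internal arcs of~$M^*|_I$. From an innermost bridging pair I would isolate a four-point sub-configuration and perform a local uncrossing; the strict inequality of Lemma~\ref{no-crossing} (which relies on strict concavity of~$g$) then gives a joint matching of strictly smaller weight, contradicting the optimality of~$M^*$. The main obstacle will be the geometric case analysis over the possible positions of the outside endpoints of the bridging arcs, which may lie to the left of~$x_a$, to the right of~$x_b$ within~$X$, or inside~$X'$; one has to check that in every case a suitable four-point exchange is available and that it does reduce the total weight.

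Once $I$ is matched within itself by~$M^*$, Lemma~\ref{bellman} makes $M^*|_I$ a minimum-weight matching on~$I$, of the same total weight as~$M|_I$. A further application of strict concavity, again via a local exchange at the scale of $(x_i, x_j)$ inside $I$, forces the exposed arc $(x_i, x_j)$ of $M|_I$ to appear also in $M^*|_I$; any minimum-weight matching on~$I$ that separated $x_i$ from~$x_j$ would be strictly improvable by the same kind of uncrossing. Hence $(x_i, x_j) \in M^*$.

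Finally, for hidden-ness of $(x_i, x_j)$ in~$M^*$, I would apply the same bridging-exchange argument one scale higher, to the set $\{x_a, \ldots, x_b\}$: this rules out the residual possibility that $x_a$ is matched to a point strictly to its left and $x_b$ to a point strictly to its right without any arc of~$M^*$ enclosing $[x_a, x_b]$. Thus~$M^*$ must contain either $(x_a, x_b)$ itself or a larger arc covering $[x_a, x_b]$, and this arc in turn covers $(x_i, x_j)$, establishing the hidden-ness of $(x_i, x_j)$ in~$M^*$.
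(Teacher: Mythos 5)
There is a genuine gap at the central step of your plan. You propose to rule out ``bridging'' arcs of the joint matching $M^*$ (arcs with exactly one endpoint in $I$) by isolating an innermost bridging pair and performing a local four-point uncrossing justified by the strict inequality behind Lemma~\ref{no-crossing}. But any two bridging arcs of $M^*$ are arcs of a \emph{nested} optimal matching (Theorem~\ref{nested}), so as intervals they are either nested or disjoint --- never crossing. For a concave cost the strict quadrangle inequality~\eqref{eq:3} only says that the \emph{crossing} pairing is worse than the other two; it gives no comparison between the nested and the disjoint pairings of four points (for $g(x)=\sqrt{x}$ and points $0<1<2<3$ the disjoint pairing beats the nested one, while for $0<1<1+\varepsilon<2$ the nested one wins). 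So no local exchange among arcs of $M^*$ alone can produce a strictly better matching, and your contradiction never materializes. The same objection applies to the case of one bridging arc to the left and one to the right of $I$: those two arcs are disjoint, and swapping them into the nested configuration is not guaranteed to decrease the weight. The only pairs that \emph{do} cross are a bridging arc of $M^*$ against an arc of the partial matching $M$ (e.g.\ the covering arc itself), but since that arc need not belong to $M^*$, ``uncrossing'' it does not by itself yield a competitor matching comparable to $M^*$.

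This is exactly the difficulty the paper's proof is built to circumvent: it works at the scale of an \emph{exposed} arc $(x_\ell,x_r)$ of the partial matching (the leftmost one whose interior leaks), forms a hybrid weight $W - W' + W'_0 \le W$ using the Bellman inequality $W' \ge W'_0$ of Lemma~\ref{bellman} (this is where the partial matching's optimality enters quantitatively, not just combinatorially), and then performs an explicitly ordered chain of uncrossings, each pairing one arc inherited from the partial-matching side with one bridging arc --- and these pairs genuinely cross, so each step strictly decreases the total. At the end all negative terms cancel and one obtains an honest perfect matching on $X\cup X'$ of weight strictly less than $W$, the desired contradiction. Your subsequent steps inherit further problems (e.g.\ ``same total weight forces the same exposed arc'' presumes uniqueness of the minimum-weight matching on $I$), but the decisive missing idea is the global comparison via $W' \ge W'_0$; without it, concavity simply does not supply a strict local improvement.
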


Observe that exposed arcs in partial matchings are generally not
preserved in the joint matching: they may disappear altogether or
become hidden. %

\begin{proof}
  By contradiction, assume that some of hidden arcs in the partial
  matching on~$X$ do not belong to the joint matching. %
  Then there will be at least one exposed arc $(x_\ell, x_r)$ in the
  partial matching on~$X$ such that some points $x_i$ with $x_\ell <
  x_i < x_r$ are connected in the joint matching to points outside
  $(x_\ell, x_r)$. %

  Indeed, if points inside every exposed arc $(x_\ell, x_r)$ would be
  matched in the joint matching only among themselves, then by
  Lemma~\ref{bellman} their matching would be exactly the same as in
  the partial matching on~$X$, and therefore all hidden arcs between
  $x_\ell$ and~$x_r$ would be preserved in the joint matching. %

  \begin{figure}
    \centering
    \begin{tikzpicture}
      \draw (1, 0) -- (11, 0);
      \draw (6.5, 0) node [below] {$\mathstrut z'_{k'}$} arc (180:0:2)
      (10.5, 0) node [below] {$\mathstrut y'_{k'}$};
      \draw (7, 0) node [below] {$\mathstrut\cdots$} arc (180:0:1.25)
      (9.5, 0) node [below] {$\mathstrut\cdots$};
      \draw (7.5, 0) node [below] {$\mathstrut z'_1$} arc (180:0:.5)
      (8.5, 0) node [below] {$\mathstrut y'_1$};
      \draw (6, 0) node [below] {$\mathstrut z_k$} arc (0:180:2.25)
      (1.5, 0) node [below] {$\mathstrut y_k$}
      (5, 0) arc (0:180:1.5) (5.25, 0) node [below] {$\mathstrut\cdots$}
      (2.25, 0) node [below] {$\mathstrut\cdots$};
      \draw (4.5, 0) node [below] {$\mathstrut z_1$} arc (0:180:.75)
      (3, 0) node [below] {$\mathstrut y_1$};
      \draw[thick,dotted] (8, 0) node [below] {$\mathstrut x_r$}
      arc (0:180:2) (4, 0) node [below] {$\mathstrut x_\ell$};
    \end{tikzpicture}
    \caption{Notation used in the proof of
      Theorem~\protect\ref{stabilization}. %
      Note that in general $x_\ell \le z_1$ and $z'_1 \le x_r$, but in
      the figure these pairs of points are shown to be distinct.}
    \label{fig:stabil1}
  \end{figure}
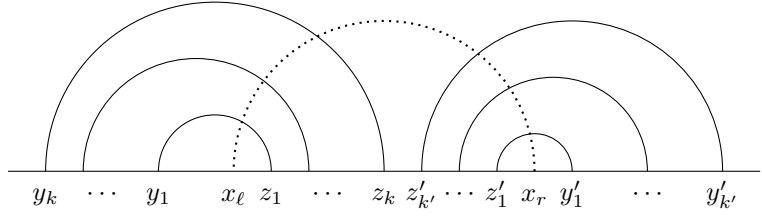
  Suppose $(x_\ell, x_r)$ is the leftmost arc of the above kind. %
  Denote all the points in the segment $[x_\ell, x_r]$ that are
  connected in the joint matching to points on the left of~$x_\ell$ by
  $z_1 < z_2 < \dots < z_k$; denote the opposite endpoints of the
  corresponding arcs by $y_1 > y_2 > \dots > y_k$, where the
  inequalities follow from the fact that the joint matching is
  nested. %
  Likewise denote those points from~$[x_\ell, x_r]$ that are connected
  in the joint matching to points on the right of~$x_r$ by $z'_1 >
  z'_2 > \dots > z'_{k'}$ and their counterparts in the joint matching
  by $y'_1 < y'_2 < \dots < y'_{k'}$ (fig.~\ref{fig:stabil1}). %

  Although $k$ or $k'$ may be zero, the number $k + k'$ must be
  positive and even. %
  Indeed, by Corollary~\ref{bipartite} the segment $[x_\ell, x_r]$
  contains an even number of points and all of them must be matched in
  a perfect matching; removing from the joint matching all arcs whose
  ends both lie in~$[x_\ell, x_r]$, we are left with an even number of
  points that are matched outside this segment. %

  Let us now restrict our attention to the segment $[x_\ell, x_r]$ and
  consider a matching that consists of the following arcs: those arcs
  of the joint matching whose both ends belong to~$[x_\ell, x_r]$; the
  arcs $(z_1, z_2)$, \dots, $(z_{2\kappa - 1}, z_{2\kappa})$, where%
  \footnote{$\lfloor \xi\rfloor$ is the largest integer $n$ such that
    $n\le \xi$.} %
  $\kappa = \lfloor k/2\rfloor$; the arcs $(z'_2, z'_1)$, \dots,
  $(z'_{2\kappa'}, z'_{2\kappa' - 1})$, where $\kappa' = \lfloor
  k'/2\rfloor$; and $(z_k, z'_{k'})$ if both $k$ and~$k'$ are odd. %

  Denote by~$W'$ the weight of this matching. %
  By Lemma~\ref{bellman}, it cannot be smaller than the weight~$W'_0$
  of the restriction of the partial matching on~$X$ to $[x_\ell,
  x_r]$. %
  For the total weight~$W$ of the joint matching on $X \cup X'$ we
  thus have
  \begin{equation}
    \label{eq:4}
    W \ge W - W' + W'_0.
  \end{equation}
  The right-hand side of~\eqref{eq:4} is represented in
  fig.~\ref{fig:stabil2}~(a) in the case when both $k$ and~$k'$ are
  odd. %
  It is a sum of positive terms corresponding to the arcs of the joint
  matching outside $[x_\ell, x_r]$ (not shown), the arcs of the
  partial matching on~$X$ inside $[x_\ell, x_r]$ (not shown, with
  exception of~$(x_\ell, x_r)$ represented with a solid arc in the
  upper halfplane), the arcs of the joint matching having one end
  inside $[x_\ell, x_r]$ and the other end outside this segment (solid
  arcs in the upper halfplane), and negative terms that come from
  subtraction of~$W'$ and correspond to the arcs connecting the $z$
  points (solid arcs in the lower halfplane). %

  We now show that by a suitable sequence of uncrossings the
  right-hand side of~\eqref{eq:4} can be further reduced to a matching
  whose weight is strictly less than~$W$. %

  \begin{figure}
    \centering
    \begin{tikzpicture}
      \draw (1, 0) -- (11, 0);
      \draw (6.5, 0) node [below] {$\mathstrut z'_{k'}$} arc (180:0:2)
      (10.5, 0) node [below] {$\mathstrut y'_{k'}$};
      \draw (7, 0) node [below] {$\mathstrut\cdots$} arc (180:0:1.25)
      (9.5, 0) node [below] {$\mathstrut\cdots$};
      \draw (7.5, 0) node [below] {$\mathstrut z'_1$} arc (180:0:.5)
      (8.5, 0) node [below] {$\mathstrut y'_1$};
      \draw (6, 0) node [below] {$\mathstrut z_k$} arc (0:180:2.25)
      (1.5, 0) node [below] {$\mathstrut y_k$}
      (5, 0) arc (0:180:1.5) (5.25, 0) node [below] {$\mathstrut\cdots$}
      (2.25, 0) node [below] {$\mathstrut\cdots$};
      \draw[very thick] (4.5, 0) node [below] {$\mathstrut z_1$} arc (0:180:.75)
      (3, 0) node [below] {$\mathstrut y_1$};
      \draw (4.5, 0) arc (180:360:.25 and .2) (6, 0) arc (180:360:.25
      and .2)
      (7, 0) arc (180:360:.25 and .2);
      \draw[very thick] (8, 0) node [below] {$\mathstrut x_r$}
      arc (0:180:2) (4, 0) node [below] {$\mathstrut x_\ell$};
    \end{tikzpicture}\\
    (a)\\[2ex]
    \begin{tikzpicture}
      \draw (1, 0) -- (11, 0);
      \draw (6.5, 0) node [below] {$\mathstrut z'_{k'}$} arc (180:0:2)
      (10.5, 0) node [below] {$\mathstrut y'_{k'}$};
      \draw (7, 0) node [below] {$\mathstrut\cdots$} arc (180:0:1.25)
      (9.5, 0) node [below] {$\mathstrut\cdots$};
      \draw (7.5, 0) node [below] {$\mathstrut z'_1$} arc (180:0:.5)
      (8.5, 0) node [below] {$\mathstrut y'_1$};
      \draw (6, 0) node [below] {$\mathstrut z_k$} arc (0:180:2.25)
      (1.5, 0) node [below] {$\mathstrut y_k$};
      \draw[very thick] (5, 0) arc (0:180:1.5)
      (5.25, 0) node [below] {$\mathstrut\cdots$}
      (2.25, 0) node [below] {$\mathstrut\cdots$};
      \draw (4.5, 0) node [below] {$\mathstrut z_1$};
      \draw (4, 0) arc (0:180:.5) (3, 0) node [below] {$\mathstrut y_1$};
      \draw[very thick] (4.5, 0) arc (180:360:.25 and .2);
      \draw (6, 0) arc (180:360:.25 and .2);
      \draw (7, 0) arc (180:360:.25 and .2);
      \draw[very thick] (8, 0) node [below] {$\mathstrut x_r$}
      arc (0:180:1.75) (4, 0) node [below] {$\mathstrut x_\ell$};
    \end{tikzpicture}\\
    (b)\\[2ex]
    \begin{tikzpicture}
      \draw (1, 0) -- (11, 0);
      \draw (6.5, 0) node [below] {$\mathstrut z'_{k'}$} arc (180:0:2)
      (10.5, 0) node [below] {$\mathstrut y'_{k'}$};
      \draw (7, 0) node [below] {$\mathstrut\cdots$} arc (180:0:1.25)
      (9.5, 0) node [below] {$\mathstrut\cdots$};
      \draw (7.5, 0) node [below] {$\mathstrut z'_1$} arc (180:0:.5)
      (8.5, 0) node [below] {$\mathstrut y'_1$};
      \draw (6, 0) node [below] {$\mathstrut z_k$} arc (0:180:2.25)
      (1.5, 0) node [below] {$\mathstrut y_k$};
      \draw (5, 0) node [below] {$\mathstrut\cdots$}
      (2.25, 0) node [below] {$\mathstrut\cdots$};
      \draw (4, 0) arc (0:180:.5) (3, 0) node [below] {$\mathstrut y_1$};
      \draw (6, 0) arc (180:360:.25 and .2);
      \draw (7, 0) arc (180:360:.25 and .2);
      \draw (8, 0) node [below] {$\mathstrut x_r$} arc (0:180:3 and 1.75)
      (4, 0) node [below] {$\mathstrut x_\ell$};
    \end{tikzpicture}\\
    (c)\\
    \caption{Step~1 of the proof (see explanation in the text).}
    \label{fig:stabil2}
  \end{figure}
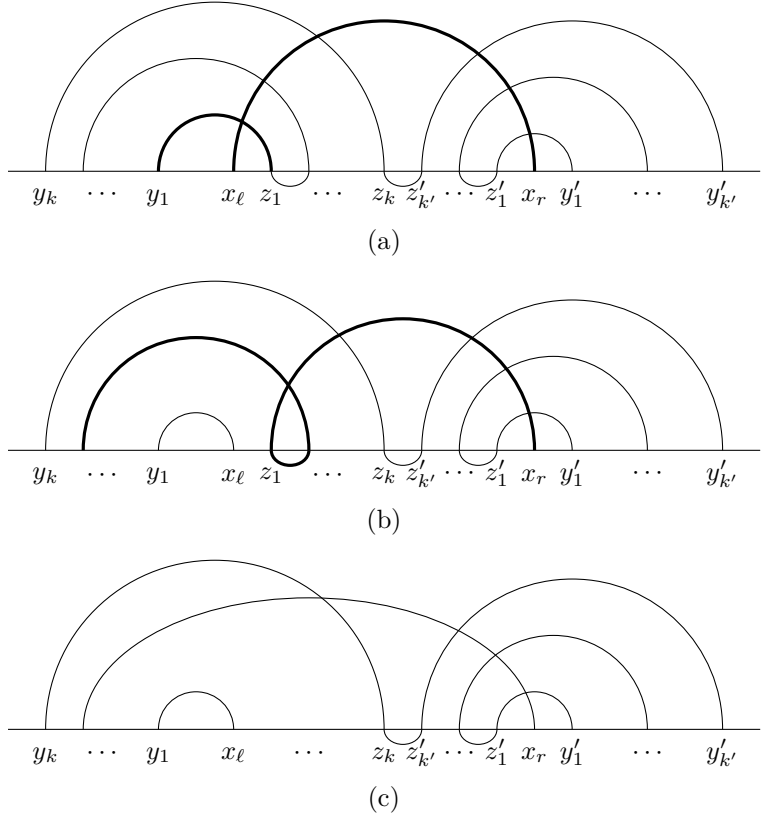

  \textsc{Step 1}. %
  Note that the arcs $(z_1, y_1)$ and~$(x_\ell, x_r)$, shown in
  fig.~\ref{fig:stabil2}~(a) with thick lines, are crossing. %
  Therefore
  \begin{displaymath}
    d(y_1, z_1) + d(x_\ell, x_r) > d(y_1, x_\ell) + d(z_1, x_r).
  \end{displaymath}
  Uncrossing these arcs gives the matching represented in
  fig.~\ref{fig:stabil2}~(b) and strictly reduces the right-hand side
  of~\eqref{eq:4}:
  \begin{displaymath}
    W > W - W' + W'_0 - d(y_1, z_1) - d(x_\ell, x_r) + d(y_1, x_\ell) +
    d(z_1, x_r).
  \end{displaymath}
  Now the arcs $(y_2, z_2)$ and $(z_1, x_r)$ are crossing, so
  \begin{displaymath}
    d(y_2, z_2) + d(z_1, x_r) - d(z_1, z_2) > d(y_2, x_r)
  \end{displaymath}
  and therefore
  \begin{displaymath}
    W > W - W' + W'_0 - d(y_1, z_1) - d(y_2, z_2) - d(x_\ell, x_r) +
    d(y_1, x_\ell) + d(z_1, z_2) + d(y_2, x_r).
  \end{displaymath}
  The right-hand side of this inequality is represented in
  fig.~\ref{fig:stabil2}~(c). %

  Repeating this step $\kappa = \lfloor k/2\rfloor$ times gives the
  inequality
  \begin{multline*}
    W > W - W' + W'_0 - d(x_\ell, x_r) - \sum_{1\le i\le 2\kappa} d(y_i, z_i) \\
    + \sum_{1\le i\le \kappa} d(z_{2i - 1}, z_{2i})
    + \sum_{1\le i\le \kappa} d(y_{2i - 1}, y_{2i - 2}) +
    d(y_{2\kappa}, x_r),
  \end{multline*}
  where in the rightmost sum $y_0$ is defined to be~$x_\ell$. %
  Note that at this stage all arcs coming to points $z_1, z_2, \dots$
  from outside $[x_\ell, x_r]$ are eliminated from the matching, except
  possibly $(y_k, z_k)$ if $k$ is odd. %

  \textsc{Step 2}. %
  It is now clear by symmetry that a similar reduction step can be
  performed on arcs going from $z'_1, z'_2, \dots$ to the right. %
  Repeating this $\kappa' = \lfloor k'/2\rfloor$ times gives the
  inequality
  \begin{multline*}
    W > W - W' + W'_0 - d(x_\ell, x_r) - \sum_{1\le i\le 2\kappa} d(y_i, z_i) 
    - \sum_{1\le i'\le 2\kappa'} d(z'_{i'}, y'_{i'}) \\
    + \sum_{1\le i\le \kappa} d(z_{2i - 1}, z_{2i})
    + \sum_{1\le i\le \kappa} d(y_{2i - 1}, y_{2i - 2}) \\
    + \sum_{1\le i'\le \kappa'} d(z'_{2i'}, z'_{2i' - 1})
    + \sum_{1\le i'\le \kappa'} d(y'_{2i' - 2}, y'_{2i' - 1}) +
    d(y_{2\kappa}, y'_{2\kappa'}),
  \end{multline*}
  where $y'_0 = x_r$. %

  \begin{figure}
    \centering
    \begin{tikzpicture}
      \draw (1, 0) -- (11, 0);
      \draw (6.5, 0) node [below] {$\mathstrut z'_{k'}$} arc (180:0:2)
      (10.5, 0) node [below] {$\mathstrut y'_{k'}$};
      \draw (7.25, 0) node [below] {$\mathstrut\cdots$}
      (9.5, 0) node [below] {$\mathstrut\cdots$};
      \draw (8, 0) node [below] {$\mathstrut x_r$} arc (180:0:.25)
      (8.5, 0) node [below] {$\mathstrut y'_1$};
      \draw[very thick] (6, 0) node [below] {$\mathstrut z_k$}
      arc (0:180:2.25) (1.5, 0) node [below] {$\mathstrut y_k$};
      \draw (5, 0) node [below] {$\mathstrut\cdots$}
      (2.25, 0) node [below] {$\mathstrut\cdots$};
      \draw (4, 0) arc (0:180:.5) (3, 0) node [below] {$\mathstrut y_1$};
      \draw (6, 0) arc (180:360:.25 and .2);
      \draw[very thick] (9.5, 0) arc (0:180:3.75 and 1.75)
      (4, 0) node [below] {$\mathstrut x_\ell$};
    \end{tikzpicture}\\
    (a)\\[2ex]
    \begin{tikzpicture}
      \draw (1, 0) -- (11, 0);
      \draw[very thick] (6.5, 0) node [below] {$\mathstrut z'_{k'}$}
      arc (180:0:2) (10.5, 0) node [below] {$\mathstrut y'_{k'}$};
      \draw (7.25, 0) node [below] {$\mathstrut\cdots$}
      (9.5, 0) node [below] {$\mathstrut\cdots$};
      \draw (8, 0) node [below] {$\mathstrut x_r$} arc (180:0:.25)
      (8.5, 0) node [below] {$\mathstrut y'_1$};
      \draw (6, 0) node [below] {$\mathstrut z_k$};
      \draw (1.5, 0) node [below] {$\mathstrut y_k$} arc (180:0:.25);
      \draw (5, 0) node [below] {$\mathstrut\cdots$}
      (2.25, 0) node [below] {$\mathstrut\cdots$};
      \draw (4, 0) arc (0:180:.5) (3, 0) node [below] {$\mathstrut y_1$};
      \draw (6, 0) arc (180:360:.25 and .2);
      \draw[very thick] (9.5, 0) arc (0:180:1.75)
      (4, 0) node [below] {$\mathstrut x_\ell$};
    \end{tikzpicture}\\
    (b)\\[2ex]
    \begin{tikzpicture}
      \draw (1, 0) -- (11, 0);
      \draw[very thick] (6.5, 0) node [below] {$\mathstrut z'_{k'}$}
      arc (0:180:.25) (10.5, 0) node [below] {$\mathstrut y'_{k'}$};
      \draw (7.25, 0) node [below] {$\mathstrut\cdots$}
      (9.5, 0) node [below] {$\mathstrut\cdots$};
      \draw (8, 0) node [below] {$\mathstrut x_r$} arc (180:0:.25)
      (8.5, 0) node [below] {$\mathstrut y'_1$};
      \draw (6, 0) node [below] {$\mathstrut z_k$};
      \draw (1.5, 0) node [below] {$\mathstrut y_k$} arc (180:0:.25);
      \draw (5, 0) node [below] {$\mathstrut\cdots$}
      (2.25, 0) node [below] {$\mathstrut\cdots$};
      \draw (4, 0) arc (0:180:.5) (3, 0) node [below] {$\mathstrut y_1$};
      \draw (6, 0) arc (180:360:.25 and .2);
      \draw (9.5, 0) arc (180:0:.5) (4, 0) node [below] {$\mathstrut x_\ell$};
    \end{tikzpicture}\\
    (c)\\
    \caption{Step~3 of the proof. %
      Note that in stage~(c) the arc $(z_k, z'_{k'})$ gives two
      contributions with positive and negative signs, which cancel
      out each other.}
    \label{fig:stabil3}
  \end{figure}
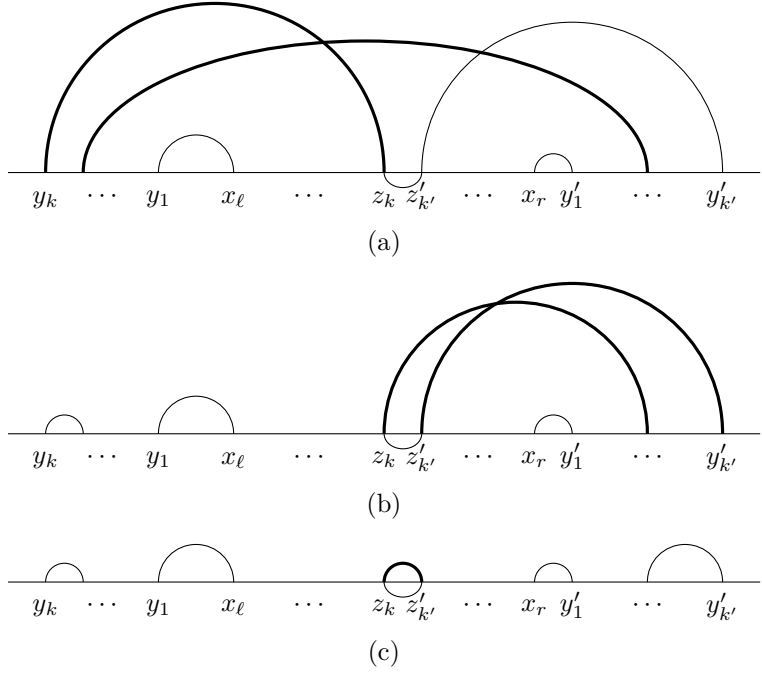

  \textsc{Step 3}. %
  If $k$ and $k'$ are odd, we perform two more uncrossings shown in
  fig.~\ref{fig:stabil3}. %

  The final estimate for~$W$ has the form
  \begin{multline}
    \label{eq:5}
    W > W - W' + W'_0 - d(x_\ell, x_r) - \sum_{1\le i\le k} d(y_i, z_i)
    - \sum_{1\le i'\le k'} d(z'_{i'}, y'_{i'}) \\
    + \sum_{1\le i\le \kappa} d(z_{2i - 1}, z_{2i})
    + \sum_{1\le i'\le \kappa'} d(z'_{2i'}, z'_{2i' - 1})
    + d(z_k, z'_{k'})\cdot [\text{$k$, $k'$ are odd}] \\
    + \sum_{1\le i\le\kappa} d(y_{2i - 1}, y_{2i - 2})
    + \sum_{1\le i'\le\kappa'} d(y'_{2i' - 2}, y'_{2i' - 1})
    + d(y_k, y'_{k'})\cdot[\text{$k$, $k'$ are even}],
  \end{multline}
  where notation such as [$k$, $k'$ are odd] means~$1$ if $k$, $k'$
  are odd and~$0$ otherwise. %

  The right-hand side of~\eqref{eq:5} contains four groups of terms:
  first,
  \begin{displaymath}
    W - \sum_{1\le i\le k} d(y_i, z_i) - \sum_{1\le i'\le k'} d(z'_{i'}, y'_{i'}),
  \end{displaymath}
  corresponding to the joint matching without the arcs connecting
  points inside $[x_\ell, x_r]$ to points outside this segment;
  second,
  \begin{displaymath}
    W' - \sum_{1\le i\le \kappa} d(z_{2i - 1}, z_{2i})
    - \sum_{1\le i'\le\kappa'} d(z'_{2i'}, z'_{2i' - 1})
    - d(z_k, z'_{k'})\cdot[\text{$k$, $k'$ are odd}],
  \end{displaymath}
  which comes with a negative sign and corresponds to the arcs of the
  joint matching with both ends inside~$[x_\ell, x_r]$, and cancels
  them from the total; third,
  \begin{displaymath}
    W'_0 - d(x_\ell, x_r),
  \end{displaymath}
  with positive sign, which corresponds to the hidden arcs of the
  partial matching on~$X$ inside the exposed arc $(x_\ell, x_r)$, not
  including the latter; %
  and finally the terms in the last line of~\eqref{eq:5},
  corresponding to the arcs matching $x_\ell$, $x_r$, and points $y_1,
  \dots, y_k, y'_1, \dots, y'_{k'}$, i.e., those points outside
  $[x_\ell, x_r]$ that were connected in the joint matching to points
  inside this segment. %

  Gathering together contributions of these four groups of terms, we
  observe that all negative terms cancel out and what is left
  corresponds to a perfect matching with a weight strictly smaller
  than~$W$, in which all arcs hidden by $(x_\ell, x_r)$ in the partial
  matching on~$X$ are restored. %
  There may still be some crossings caused by terms of the fourth
  group and \emph{not} involving the hidden arcs in $[x_\ell, x_r]$;
  uncrossing them if necessary gives a nested perfect matching whose
  weight is strictly less than that of the joint matching. %
  This contradicts the assumption that the latter is the
  minimum-weight matching on~$X\cup X'$. %
  Therefore all hidden arcs in the partial matching on~$X$ (and, by
  symmetry, those in the partial matching on~$X'$) belong to the joint
  matching. %
\end{proof}

\section{Recursion for minimum weights}
\label{sec:bellman-equation}

In this section we show how to apply Theorem~\ref{stabilization} to
compute the minimum-weight perfect matching algorithmically. %
Let $x_1 < x_2 < \dots < x_{2n}$ be a set of points on the real
line~$\mathbf{R}$ equipped with a homogeneous distance~$d$ of the
form~\eqref{eq:1}. %
For indices $i$,~$j$ of opposite parity and such that $i < j$, let
$W_{i,j}$ be the weight of the minimum-weight perfect matching on the
$j - i + 1$ points~$x_i < x_{i + 1} < \dots < x_j$. %
It is convenient to organize weights~$W_{i, j}$ with $i < j$ into a
pyramidal table:
\begin{displaymath}
  \begin{array}{cccccccll}
    &&&& W_{1, 2n} &&&& \\
    &&& \hdotsfor{3} &&& \\
    && W_{1, 6} & W_{2, 7} & W_{3, 8} & W_{4, 9} & \dots && \\
    & W_{1, 4} & W_{2, 5} & W_{3, 6} & W_{4, 7} & W_{5, 8} & \dots & W_{2n - 3, 2n}     & \\
    W_{1,2} & W_{2, 3} & W_{3, 4} & W_{4, 5} & W_{5, 6} & W_{6, 7} & \dots & W_{2n - 2, 2n - 1} & W_{2n - 1, 2n}
  \end{array}
\end{displaymath}
\begin{theorem}
  \label{recursion}
  For all indices $i$,~$j$ of opposite parity with $1 \le i < j \le
  2n$, weights~$W_{i, j}$ satisfy the following second-order recursion
  \begin{equation}
    \label{eq:7}
    W_{i, j} = \min\, \bigl[d(x_i, x_j) + W_{i + 1, j - 1},\
    W_{i, j - 2} + W_{i + 2, j} - W_{i + 2, j - 2}\bigr]
  \end{equation}
  with ``initial conditions''
  \begin{equation}
    \label{eq:6}
    W_{i, i - 1} = 0,\quad W_{i + 2, i - 1} = -d(x_i, x_{i + 1}).
  \end{equation}
\end{theorem}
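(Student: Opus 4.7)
The plan is to show that both expressions on the right-hand side of~\eqref{eq:7} are upper bounds for $W_{i,j}$, and that at least one of them is attained, so that $W_{i,j}$ equals their minimum. The first bound, $W_{i,j} \le d(x_i, x_j) + W_{i+1, j-1}$, is immediate: the matching consisting of the arc $(x_i, x_j)$ together with an optimal matching on $\{x_{i+1}, \ldots, x_{j-1}\}$ has exactly this weight.

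For the second bound $W_{i,j} \le W_{i, j-2} + W_{i+2, j} - W_{i+2, j-2}$, Theorem~\ref{stabilization} plays the essential role. Let $M_A$, $M_B$, $M_C$ denote the optimal matchings on $\{x_i, \ldots, x_{j-2}\}$, $\{x_{i+2}, \ldots, x_j\}$, and $\{x_{i+2}, \ldots, x_{j-2}\}$ respectively. Applying Theorem~\ref{stabilization} with $X = \{x_{i+2}, \ldots, x_{j-2}\}$ and $X' = \{x_i, x_{i+1}\}$ on the left, and symmetrically with $X' = \{x_{j-1}, x_j\}$ on the right, shows that the hidden arcs $H_C$ of $M_C$ are also hidden arcs of both $M_A$ and $M_B$. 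Writing $M_A = H_C \sqcup E_A$, $M_B = H_C \sqcup E_B$, and $M_C = H_C \sqcup E_C$, Lemma~\ref{bellman} identifies $E_A, E_B, E_C$ with the optimal matchings on the reduced point sets $\{x_i, x_{i+1}\} \cup V$, $V \cup \{x_{j-1}, x_j\}$, and $V$, where $V$ denotes the endpoints of the exposed arcs of $M_C$. Concatenating $H_C$ with an optimal matching on the reduced set $\{x_i, x_{i+1}\} \cup V \cup \{x_{j-1}, x_j\}$ produces a matching on $\{x_i, \ldots, x_j\}$ whose weight is bounded by $W(H_C) + W(E_A) + W(E_B) - W(E_C) = W_{i,j-2} + W_{i+2,j} - W_{i+2,j-2}$, provided the analogous second-term bound holds for the reduced instance. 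This sets up an induction on the total number of points: the reduction is strict whenever $H_C \ne \emptyset$, and the remaining base case (when $M_C$ consists of adjacent-pair arcs only, so the reduction fails to shrink the problem) is handled directly via Lemma~\ref{no-crossing}.

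To see that the minimum is attained, let $M_D$ be the optimal matching on $\{x_i, \ldots, x_j\}$. If $(x_i, x_j) \in M_D$, then Lemma~\ref{bellman} applied to the interior of this outermost arc yields $W_{i,j} = d(x_i, x_j) + W_{i+1, j-1}$ exactly. Otherwise, a parallel overlay construction with $M_D$ and $M_C$ in place of $M_A$ and $M_B$ (and using Lemma~\ref{no-crossing} for local exchanges) produces matchings on $\{x_i, \ldots, x_{j-2}\}$ and $\{x_{i+2}, \ldots, x_j\}$ whose combined weight is at most $W_{i,j} + W_{i+2, j-2}$, establishing $W_{i,j-2} + W_{i+2,j} - W_{i+2,j-2} \le W_{i,j}$ and hence equality of the second term.

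The main obstacle is the combinatorial bookkeeping in the second-term upper bound: the induction-by-reduction scheme works cleanly when $M_C$ has any hidden arcs but degenerates in the adjacent-pair-only case, demanding a separate direct argument via the no-crossing inequality. The parallel argument for the equality case $(x_i, x_j) \notin M_D$ is equally delicate, resting on local exchanges in the combined edge structure of $M_D$ and $M_C$.
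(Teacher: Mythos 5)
Your skeleton (two upper bounds plus an attainment argument) is reasonable, and the inductive reduction is sound as far as it goes: by Theorem~\ref{stabilization} the hidden arcs $H_C$ of $M_C$ persist in $M_A$ and $M_B$, by Lemma~\ref{bellman} the complements $E_A$, $E_B$, $E_C$ are optimal on the reduced point sets, and splicing $H_C$ onto an optimal matching of the reduced instance does transfer the inequality with a strictly smaller point count whenever $H_C\neq\emptyset$. But the proposal defers all of its substance to two steps that are asserted rather than proved, and these are exactly where the theorem is hard. The base case $H_C=\emptyset$ is not ``handled directly via Lemma~\ref{no-crossing}'': to exhibit a matching on $\{x_i,\dots,x_j\}$ of weight at most $W(M_A)+W(M_B)-W(M_C)$ you must extract from the overlay $M_A\cup M_B$ a perfect matching of all the points whose leftover is a perfect matching of the middle. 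That overlay decomposes into alternating cycles and two alternating paths joining $\{x_i,x_{i+1},x_{j-1},x_j\}$ in pairs, and the componentwise extraction works only when the paths pair $x_i$ with $x_{i+1}$ and $x_{j-1}$ with $x_j$; a path joining a left extremal point to a right one has the wrong parity of vertices and admits no such split. One must first uncross, and one must \emph{prove} that suitable crossing pairs exist --- which is not a consequence of the quadrangle inequality alone but of the nested structure of the optimal matchings and of where $x_i$, $x_{i+1}$, $x_{j-1}$, $x_j$ are matched. This is precisely the content of the paper's decompositions (8)--(11) and of its case distinction $x_k<x_{\ell'}$ versus $x_k>x_{\ell'}$, i.e.\ the bulk of the actual proof.

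The attainment step suffers from the same gap in a more dangerous form. The inequality $W_{i,j-2}+W_{i+2,j}-W_{i+2,j-2}\le W_{i,j}$ is \emph{false} in general: already for four points with $x_2,x_3$ close together and $x_1,x_4$ far apart, the right-hand side is $d(x_1,x_4)+d(x_2,x_3)$ while the left-hand side is $d(x_1,x_2)+d(x_3,x_4)$, which can be strictly larger. So your overlay of $M_D$ and $M_C$ cannot be split unconditionally; the hypothesis $(x_i,x_j)\notin M_D$ must force the alternating paths into the favourable pairing, or else guarantee crossings whose removal repairs the parity, and the proposal gives no argument for either. (In the four-point example the two edges of $M_D$ are nested, so no uncrossing is even available --- consistent with the fact that there $(x_i,x_j)\in M_D$, but showing that this is the only way the construction can fail is the whole point.) The paper avoids these difficulties by a different route: it uses Theorem~\ref{stabilization} to pin down exactly where $W_{i,j-2}$ and $W_{i+2,j}$ differ from $W_{i+2,j-2}$, identifies $W_{i,j-2}+W_{i+2,j}-W_{i+2,j-2}$ as the weight of one explicit matching, and shows that matching to be optimal among all matchings in which $x_i$ and $x_j$ lie in different exposed arcs (handling the crossed case $x_k>x_{\ell'}$ separately). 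Until you supply the structural analysis that controls the path pairings and the existence of the needed crossings, both of your key inequalities remain unproved.
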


\begin{proof}
  By an abuse of notation, we will refer to the minimum-weight
  perfect matching on points $x_r < x_{r + 1} < \dots < x_s$ as the
  ``matching~$W_{r, s}$.'' %

  Consider first the matching that consists of the arc~$(x_i, x_j)$
  and all arcs of the matching~$W_{i + 1, j - 1}$, and observe that by
  Lemma~\ref{bellman} its weight $d(x_i, x_j) + W_{i + 1, j - 1}$ is
  minimal among all matchings that contain~$(x_i, x_j)$. %

  We now examine the meaning of the expression $W_{i, j - 2} + W_{i +
    2, j} - W_{i + 2, j - 2}$. %
  Denote the point connected in the matching~$W_{i, j - 2}$ to~$x_i$
  by~$x_k$ and the point connected to~$x_{i + 1}$ by~$x_\ell$. %
  By Corollary~\ref{bipartite}, the pairs of indices $i, k$ and $i +
  1, \ell$ both have opposite parity. %
  Assume first that
  \begin{equation}
    \label{eq:13}
    x_{i + 1} < x_\ell < x_k \le x_{j - 2}.
  \end{equation}
  Applying Theorem~\ref{stabilization} to the sets $X = \{x_i, x_{i +
    1}\}$ and $X' = \{x_{i + 2}, \dots, x_{j - 2}\}$ and taking into
  account parity of $k$ and~$\ell$, we see that $x_k$ and~$x_\ell$ (as well
  as their neighbors $x_{k + 1}$ and~$x_{\ell - 1}$ if they are contained
  in~$[x_{i + 2}, x_{j - 2}]$) belong to exposed arcs of the
  matching~$W_{i + 2, j - 2}$. %
  Thus the matching~$W_{i, j - 2}$ has the following structure:
  \begin{center}
    \begin{tikzpicture}
      \draw (0, 0) -- (11, 0) node [right] {,};
      \draw (.5, 0) node [below] {$\mathstrut x_i$}
      arc (180:0:3.75 and 1.75) (8, 0) node [below] {$\mathstrut x_k$};
      \draw (1, 0) node [below] {$\mathstrut x_{i + 1}$}
      arc (180:0:1.5 and 1) (4, 0) node [below] {$\mathstrut x_\ell$};
      \draw[dashed] (2, 0) arc (180:0:.5);
      \draw (2.75, 0) node [below] {$\underbrace{\qquad\qquad\quad}%
        _{\displaystyle W_{i + 2, \ell - 1}}$};
      \draw (3.5, 0) node [above] {$\cdots$};
      \draw[thick,dotted] (4, 0) arc (180:0:.5) (7, 0) arc (180:0:.5);
      \draw (5, 0) arc (180:135:.75) (7, 0) arc (0:45:.75);
      \draw (6, 0) node [above] {$\cdots$}
      node [below] {$\underbrace{\qquad\qquad\qquad\qquad}%
        _{\displaystyle W_{\ell + 1, k - 1}}$};
      \draw (8.5, 0) node [above] {$\cdots$};
      \draw[dashed] (9, 0) arc (180:0:.75);
      \draw (9.6, 0) node [below] {$\underbrace{\qquad\qquad\qquad}%
        _{\displaystyle W_{k + 1, j - 2}}$};
    \end{tikzpicture}
  \end{center}
  where dashed (resp., dotted) arcs correspond to those exposed arcs
  of the matching~$W_{i + 2, j - 2}$ that belong (resp., do not
  belong) to~$W_{i, j - 2}$. %

  Since points $x_{\ell - 1}$ and~$x_{k + 1}$ belong to exposed arcs in
  the matching~$W_{i + 2, j - 2}$, by Lemma~\ref{bellman} we see that
  the (possibly empty) parts of this matching that correspond to
  points $x_{i + 2} < \dots < x_{\ell - 1}$ and $x_{k + 1} < \dots < x_{j
    - 2}$ coincide with the (possibly empty) matchings $W_{i + 2, \ell -
    1}$ and~$W_{k + 1, j - 2}$. %
  For the same reason the (possibly empty) part of the matching~$W_{i,
    j - 2}$ supported on $x_{\ell + 1} < \dots < x_{k - 1}$ coincides
  with~$W_{\ell + 1, k -1}$. %
  Therefore
  \begin{equation}
    \label{eq:8}
    W_{i, j - 2} = d(x_i, x_k) + d(x_{i + 1}, x_\ell) + W_{i + 2, \ell - 1}
    + W_{\ell + 1, k - 1} + W_{k + 1, j - 2}.
  \end{equation}
  Taking into account~\eqref{eq:6}, observe that in the case $k = i +
  1$ and $\ell = i$, which was left out in~\eqref{eq:13}, this expression
  still gives the correct formula $W_{i, j - 2} = d(x_i, x_{i + 1}) +
  W_{i + 2, j - 2}$. %

  Now assume that in the matching $W_{i + 1, j}$ the point $x_j$ is
  connected to $x_{\ell'}$ and the point~$x_{j - 1}$ to~$x_{k'}$. %
  A similar argument gives
  \begin{equation}
    \label{eq:10}
    W_{i + 2, j} = W_{i + 2, \ell' - 1} + W_{\ell' + 1, k' - 1} + W_{k' + 1,
      j - 2} + d(x_{\ell'}, x_j) + d(x_{k'}, x_{j - 1});
  \end{equation}
  in particular, if $\ell' = j - 1$ and $k' = j$, then $W_{i + 2, j} =
  W_{i + 2, j - 2} + d(x_{j - 1}, x_j)$. %

  Suppose that $x_k < x_{\ell'}$. %
  Using again Lemma~\ref{bellman} and taking into account that $x_k$,
  $x_{k + 1}$, $x_{\ell' - 1}$, and~$x_{\ell'}$ all belong to exposed
  arcs in~$W_{i + 2, j - 2}$, we can write
  \begin{equation}
    \label{eq:12}
    W_{k + 1, j - 2} = W_{k + 1, \ell' - 1} + W_{\ell', j - 2},\quad
    W_{i + 2, \ell' - 1} = W_{i + 2, k} + W_{k + 1, \ell' - 1}
  \end{equation}
  and
  \begin{equation}
    \label{eq:11}
    W_{i + 2, j - 2} = W_{i + 2, k} + W_{k + 1, \ell' - 1} + W_{\ell', j - 2}.
  \end{equation}
  Substituting~\eqref{eq:12} into \eqref{eq:8} and~\eqref{eq:10} and
  taking into account~\eqref{eq:11}, we obtain
  \begin{multline*}
    W_{i, j - 2} + W_{i + 2, j} - W_{i + 2, j - 2} = d(x_i, x_k) +
    d(x_{i + 1}, x_\ell) + W_{i + 2, \ell - 1} + W_{\ell + 1,
      k - 1} \\
    + W_{k + 1, \ell' - 1} + d(x_{\ell'}, x_j) + W_{\ell' + 1, k' - 1} +
    d(x_{k'}, x_{j - 1}) + W_{k' + 1, j - 2}.
  \end{multline*}
  The right-hand side of this expression corresponds to a matching
  that coincides with $W_{i, j - 2}$ on $[x_i, x_k]$, with $W_{i + 2,
    j - 2}$ on~$[x_{k + 1}, x_{\ell' - 1}]$, and with~$W_{i + 1, j}$ on
  $[x_{\ell'}, x_j]$. %
  By Lemma~\ref{bellman} this matching cannot be improved on any of
  these three segments and is therefore optimal among all matchings in
  which $x_i$ and~$x_j$ belong to different exposed arcs. %

  It follows that under the assumption that $x_k < x_{\ell'}$ the
  expression in the right-hand side of~\eqref{eq:7} gives the minimum
  weight of all matchings on $x_i < x_{i + 1} < \dots < x_j$. %
  Moreover, the only possible candidates for the optimal matching are
  those constructed above: one that corresponds to $d(x_i, x_j) + W_{i
    + 1, j - 1}$ and one given by the right-hand side of the latter
  formula. %

  It remains to consider the case $x_k \ge x_{\ell'}$. %
  Since $x_k \neq x_{\ell'}$ for parity reasons, it follows that $x_k >
  x_{\ell'}$; now a construction similar to the above yields a matching
  which corresponds to $W_{i, j - 2} + W_{i + 2, j} - W_{i + 2, j -
    2}$ and in which the arcs $(x_i, x_k)$ and~$(x_{\ell'}, x_j)$ are
  crossed. %
  Uncrossing them leads to a matching with strictly smaller weight,
  which contains the arc~$(x_i, x_j)$ and therefore cannot be better
  than $d(x_i, x_j) + W_{i + 1, j - 1}$. %
  This means that~\eqref{eq:7} holds in this case too with $W_{i, j} =
  d(x_i, x_j) + W_{i + 1, j - 1}$. %
\end{proof}

Obviously, recursion~\eqref{eq:7} can be solved for all $1\le i < j
\le 2n$ in $O(n^2)$ operations, resulting in computation of
weights~$W_{ij}$ of all partial optimal matchings. %
This process is carried out in a ``bottom to top'' fashion: in the
pyramid, weights~$W_{i, j}$ with smaller values of~$j - i$ are
computed first. %

To determine the optimal matching on all the points $x_1, x_2, \dots,
x_{2n}$, one should keep track of those pairs $(i, j)$ for which
minimum in~\eqref{eq:6} is attained at the first alternative. %
Indeed, if the minimum is never attained at the first alternative,
then it is easy to see that the optimal perfect matching is $(x_1,
x_2), (x_3, x_4), \dots, (x_{2n - 1}, x_{2n})$. %
Suppose now that the first alternative provides minimum for for some
$W_{i_0j_0}$. %
Then according to Theorem~\ref{stabilization} one can retain the
matching for the points $x_{i_0 + 1}, \dots, x_{j_0 - 1}$ that has
been computed by this moment and consider a new, smaller
minimum-weight perfect matching problem on the points $x_1, x_2,
\dots, x_{i_0}, x_{j_0}, x_{j_0 + 1}, \dots, x_{2n}$. %
More precisely, it suffices to remove from the pyramidal table
quantities $W_{ij}$ with $i, j$ satisfying at least one of the
conditions $i_0 < i < j_0$ or $i_0 < j < j_0$, replace $W_{i_0j_0}$
with~$d(x_{i_0}, x_{j_0})$, stack the cells with either $i$ or~$j$
outside $(i_0, j_0)$ into a smaller pyramidal table, and continue
solving the recursion. %

\begin{figure}
  \begin{displaymath}
    \begin{array}{ccccccccc}
      &&&& (W_{1, 10}) &&&& \\
      &&& (W_{1, 8}) & (W_{2, 9}) & (W_{3, 10}) &&& \\
      && (W_{1, 6}) & (W_{2, 7}) & (W_{3, 8}) & \faint{(W_{4, 9})} & \faint{(W_{5, 10})} && \\
      & \faint{W_{1, 4}} & \faint{W_{2, 5}} & [W_{3, 6}] & \faint{(W_{4, 7})} & \faint{(W_{5, 8})} & (W_{6, 9}) & (W_{7, 10}) & \\
      W_{1, 2} & W_{2, 3} & \faint{W_{3, 4}} & \faint{W_{4, 5}} & \faint{W_{5, 6}} & W_{6, 7} & W_{7, 8} & W_{8, 9} & W_{9, 10} \\[1ex]
      &&&& \downarrow &&&& \\[1ex]
      &&&& (W_{1, 10}) &&&& \\
      &&& (W_{1, 8}) & (W_{2, 9}) & (W_{3, 10}) &&& \\
      && (W_{1, 6}) & (W_{2, 7}) & (W_{3, 8}) & (W_{6, 9}) & (W_{7, 10}) && \\
      & W_{1, 2} & W_{2, 3} & d(x_3, x_6) & W_{6, 7} & W_{7, 8} & W_{8, 9} & W_{9, 10} & \\
    \end{array}
  \end{displaymath}
  \caption{Reduction of the pyramidal table ($i_0 = 3$, $j_0 = 6$, $n
    = 5$).}
  \label{fig:reduction}
\end{figure}

This reduction step is illustrated in fig.~\ref{fig:reduction}. %
The elements to be removed from the table are shown in light gray in
the top pane. %
Assuming that the rows are scanned left to right, at the moment when
it is found that the element $W_{i_0, j_0} = W_{3, 6}$ (in brackets)
involves the first alternative in~\eqref{eq:7}, the elements shown in
parentheses have not yet been computed. %
Those of them that are to be kept in the table (the ``black'' ones)
are then stuck with the already computed elements to form a smaller
pyramid shown in the bottom, and the recursion resumes with the
element $W_{3, 6}$ replaced with $d(x_3, x_6)$. %

At the reduction step illustrated in fig.~\ref{fig:reduction}, the
matching is updated with one arc $(x_4, x_5)$, which is guaranteed to
belong to the optimal matching by Theorem~\ref{stabilization}. %
Generally, every time a reduction step is performed on an
element~$W_{i_0, j_0}$ and results in removal of the points $x_{k_1} <
x_{k_2} < \dots < x_{k_{2m}}$ that have been ``covered'' with the
arc~$(i_0, j_0)$, the matching is updated with arcs $(x_{k_1},
x_{k_2}), \dots, (x_{k_{2m - 1}}, x_{k_{2m}})$. %
When the process reaches the top element of the pyramid, a similar
matching of consecutive pairs is performed on the remaining
elements. %

A formal description of this procedure is given in
\cites{Delon.J:2010,Delon.J:2011} in somewhat different terms
(emphasizing ``local matching indicators'' instead of the
quantities~$W_{ij}$). %
Since this algorithm requires computing all the weights $W_{ij}$ in
the worst case, it takes at most~$O(n^2)$ operations for the matching
on~$2n$ points.

It is interesting to compare performance of our algorithm to the
celebrated Edmonds algorithm for minimum-weight perfect matching in a
general graph with $N$ vertices and~$M$ edges
\cites{Edmonds.J:1965a,Edmonds.J:1965b}. %
Note that we effectively consider a complete graph on $N = 2n$
vertices, i.e., $M = n(2n - 1)$. %
In this case recent improvements of the Edmonds algorithm run in
$O(n^3)$ time (see, e.g., the survey~\cite{Cook.W:1999}) whereas our
specialized algorithm is at most quadratic in~$n$. %
 
\begin{bibdiv}
  \begin{biblist}

    \DefineJournal{jalg}{0196-6774}{J. Algorithms}{Journal of
      Algorithms}
    \DefineJournal{jms}{1573-8795}{J. Math. Sciences}{Journal of
      Mathematical Sciences}

    \bib{Aggarwal.A:1992}{article}{
      author={Aggarwal, Alok},
      author={Bar-Noy, Amotz},
      author={Khuller, Samir},
      author={Kravets, Dina},
      author={Schieber, Baruch},
      title={Efficient minimum cost matching using quadrangle
        inequality}, 
      journal={jalg},
      volume={19},
      number={1},
      date={1995-07},
      pages={116--143},
      doi={doi:10.1006/jagm.1995.1030}
    }

    \bib{Burago.D:2001}{book}{
      author={Burago, Dmitri},
      author={Burago, Yuri},
      author={Ivanov, Sergei},
      title={A Course in Metric Geometry},
      date={2001},
      publisher={AMS},
      address={Providence, Rhode Island},
      series={Graduate Studies in  Mathematics},
      volume={33}
    }

    \bib{Cook.W:1999}{article}{
      author={Cook, William},
      author={Rohe, Andr{\'e}},
      title={Computing minimum-weight perfect matchings},
      date={1999},
      journal={INFORMS Journal on Computing},
      volume={11},
      number={2},
      pages={138--148}
    }

    \bib{Delon.J:2010}{article}{
      author={Delon, Julie},
      author={Salomon, Julien},
      author={Sobolevski, Andrei},
      title={Local matching indicators for concave transport costs},
      journal={C. R. Acad. Sci. Paris, S\'erie~I Math.},
      volume={348},
      number={15--16},
      date={2010-08},
      pages={901--905},
      doi={doi:10.1016/j.crma.2010.07.010}
    }

    \bib{Delon.J:2011}{misc}{
      author={Delon, Julie},
      author={Salomon, Julien},
      author={Sobolevski, Andrei},
      title={Local matching indicators for transport problems with
        concave costs},
      date={2011},
      status={in preparation}
    }

    \bib{Edmonds.J:1965a}{article}{
      author={Edmonds, J.},
      title={Maximum matching and a polyhedron with $0,1$-vertices},
      journal={Journal of Research of the National Bureau of
        Standards},
      volume={69B},
      date={1965},
      pages={125--130}
    }

    \bib{Edmonds.J:1965b}{article}{
      author={Edmonds, J.},
      title={Paths, trees and flowers},
      journal={Canadian Journal of Mathematics},
      volume={17},
      date={1965},
      pages={449--467}
    }

    \bib{Heinonen.J:2001}{book}{
      author={Heinonen, Juha},
      title={Lectures on Analysis in Metric Spaces},
      date={2001},
      publisher={Springer-Verlag},
      series={Universitext}
    }

    \bib{Karp.R:1975}{article}{
      author={Karp, Richard M.},
      author={Li, S. Y. R.},
      title={Two special cases of the assignment problem},
      journal={Discrete Mathematics},
      volume={13},
      date={1975},
      pages={129--142}
    }

    \bib{McCann.R:1999}{article}{
      author={McCann, Robert J.},
      title={Exact solutions to the transportation problem on the
        line},
      journal={Proc. R. Soc. A: Math., Phys. and Eng. Sci.},
      volume={455},
      date={1999},
      pages={1341--1380}
    }

    \bib{Werman.M:1986}{article}{
      author={Werman, Michael},
      author={Peleg, Shmuel},
      author={Melter, Robert},
      author={Kong, T. Y.},
      title={Bipartite graph matching for points on a line or a
        circle}, 
      journal={jalg},
      volume={7},
      number={2},
      date={1986},
      pages={277--284}
    }

  \end{biblist}
\end{bibdiv}

\end{document}